\def\BState{\State\hskip-\ALG@thistlm}
\newcommand{\norm}[1]{\left\lVert#1\right\rVert}
\newtheorem{theorem}{Theorem}[section]
\newtheorem{lemma}[theorem]{Lemma}
\newtheorem{proposition}[theorem]{Proposition}
\newenvironment{proof}[1][\indent \textit{Proof:}]{\begin{trivlist}
\item[\hskip \labelsep #1]}{\end{trivlist}}
\newenvironment{definition}[1][\indent \textit{Definition:}]{\begin{trivlist}
\item[\hskip \labelsep #1]}{\end{trivlist}}
\newenvironment{remark}[1][\indent \textit{Remark}]{\begin{trivlist}
\item[\hskip \labelsep #1]}{\end{trivlist}}
\newcommand{\qed}{\nobreak \ifvmode \relax \else
      \ifdim\lastskip<1.5em \hskip-\lastskip
      \hskip1.5em plus0em minus0.5em \fi \nobreak
      \vrule height0.75em width0.5em depth0.25em\fi}
\newcommand{\bs}{\boldsymbol}
\newcommand{\ol}{\overline}
\newcommand{\mc}{\mathcal}
\begin{document}

\title{Polynomial-Time Methods to Solve Unimodular Quadratic Programs With Performance Guarantees}

\author{Shankarachary~Ragi,~\IEEEmembership{Member,~IEEE,}
        Edwin~K.~P.~Chong,~\IEEEmembership{Fellow,~IEEE,}
        and~Hans~D.~Mittelmann
\thanks{The work of S.~Ragi and H.~D.~Mittelmann was supported in part by Air
Force Office of Scientific Research under grant FA 9550-15-1-0351. The work of E.~K.~P.~Chong was supported in part by National Science
Foundation under grant CCF-1422658. This work appeared in part in the
\emph{Proceedings of the IEEE International Conference on Acoustics, Speech, and
Signal Processing} (ICASSP), Mar 2017.}
\thanks{S. Ragi and H. D. Mittelmann are with the School of Mathematical and Statistical Sciences, Arizona State University, Tempe,
AZ, 85287, USA e-mail: \{shankarachary.ragi, mittelmann\}@asu.edu.}
\thanks{E. K. P. Chong is with the Department of Electrical and Computer Engineering, Colorado State University, Fort Collins, CO, 80523, USA e-mail: edwin.chong@colostate.edu.}}

\maketitle

\begin{abstract}
We develop polynomial-time heuristic methods to solve unimodular quadratic programs (UQPs) approximately, which are known to be NP-hard. In the UQP framework, we maximize a quadratic function of a vector of complex variables with unit modulus. Several problems in active sensing and wireless communication applications boil down to UQP. With this motivation, we present three new heuristic methods with polynomial-time complexity to solve the UQP approximately. The first method is called \textit{dominant-eigenvector-matching}; here the solution is picked that matches the complex arguments of the dominant eigenvector of the Hermitian matrix in the UQP formulation. We also provide a performance guarantee for this method. The second method, a \textit{greedy} strategy, is shown to provide a performance guarantee of $(1-1/e)$ with respect to the optimal objective value given that the objective function possesses a property called \textit{string submodularity}. The third heuristic method is called \emph{row-swap greedy strategy}, which is an extension to the \emph{greedy strategy} and utilizes certain properties of the UQP to provide a better performance than the \emph{greedy strategy} at the expense of an increase in computational complexity. We present numerical results to demonstrate the performance of these heuristic methods, and also compare the performance of these methods against a standard heuristic method called \emph{semidefinite relaxation}.
\end{abstract}

\begin{IEEEkeywords}
Unimodular codes, unimodular quadratic programming, heuristic methods, radar codes, string submodularity 
\end{IEEEkeywords}

\IEEEpeerreviewmaketitle

\section{INTRODUCTION}
  Unimodular quadratic programming (UQP) appears naturally in radar waveform-design, wireless communication, and active sensing applications \cite{Mojtaba_UQP}. 
  To state the UQP problem in simple terms- a finite sequence of complex variables with unit
  modulus to be optimized maximizing a quadratic objective function. In the context of a radar system that transmits a linearly encoded burst of pulses, 
  the authors of \cite{Mojtaba_UQP} showed that the problems of designing the coefficients 
  (or codes) that maximize the signal-to-noise ratio (SNR) \cite{Maio_code_design} or minimize the Cramer-Rao lower bound (CRLB) lead to a UQP 
  (see \cite{Mojtaba_UQP,Maio_code_design} for more details). We also know that UQP is NP-hard from the arguments presented in 
  \cite{Mojtaba_UQP,Zhang_NPhard} and the references therein. In this study, we focus on developing tractable heuristic methods to solve the 
  UQP problem approximately having polynomial complexity with respect to the size of the problem. We also provide performance bounds for these heuristic methods. 
  
  In this study, a bold uppercase letter represents a matrix and a bold lowercase letter represents a vector, and if not bold it represents
  a scalar. Let $\bs{s}$ represent the unimodular code sequence of length $N$, where each element of this vector lies on the unit circle $\Omega$
  centered at the origin in the complex plane, i.e., $\Omega = \{ x \in \mathbb{C}, |x| = 1 \}$. The UQP problem is stated as follows:
 
  \begin{equation}\label{UQP}
  \begin{aligned}
  & \underset{\bs{s} \in \Omega^N}{\text{maximize}}
  & & \bs{s}^H \bs{R} \bs{s},
  \end{aligned}
  \end{equation}
  where $\bs{R} \in \mathbb{C}^{N\times N}$ is a given Hermitian matrix. 
  
  There were several attempts at solving the UQP problem (or a variant) approximately or exactly in the past; see references in \cite{Mojtaba_UQP}. For instance, 
  the authors of \cite{Kyril} studied the discrete version of the UQP problem, where the unimodular codes to be optimized are selected from a finite set of points on the complex unit circle around the origin, as opposed to the set of
  all points that lie on this unit circle in our UQP formulation (as shown in (\ref{UQP})). Under the condition that the Hermitian matrix in 
  this discretized UQP is rank-deficient and the rank behaves like $\mathcal{O}(1)$ with respect to the dimension of the problem, the authors of 
  \cite{Kyril} proposed a polynomial time algorithm to obtain the optimal solution. Inspired by these efforts, we propose three new heuristic methods to solve the 
  UQP problem (\ref{UQP}) approximately, where the computational complexity grows only polynomially with the size of the problem. In our study, we exploit certain properties of Hermitian matrices to derive performance bounds for these methods.   
  
  The rest of the paper is organized as follows. In Section~\ref{sec:dom-eig_heur}, we present a heuristic method called \textit{dominant-eigenvector-matching} and a performance bound for this method. In Section~\ref{sec:greedy_heur}, we develop a \textit{greedy} strategy to solve the UQP problem approximately, which has polynomial complexity with respect to the size of the problem; we also derive a performance bound (when $\bs{R}$ satisfies certain conditions) for this method for a class of UQP problems. In Section~\ref{sec:rowswap_greedy}, we discuss the third heuristic method called \emph{row-swap greedy strategy}, and we also derive a performance bound for this method for certain class of UQP problems. In Section~\ref{sec:applic_examples}, we show application examples where our \emph{greedy} and \emph{row-swap greedy} methods are guaranteed to provide the above-mentioned performance guarantees. In Section~\ref{sec:Simulation}, we demonstrate the effectiveness of the above-mentioned heuristic methods via a numerical study. Section~\ref{sec:conclusion} provides a summary of the results and the concluding remarks.
  
  \section{DOMINANT EIGENVECTOR-MATCHING HEURISTIC}\label{sec:dom-eig_heur}
  Let $\lambda_1, \ldots,\lambda_N$ be the eigenvalues of $\bs{R}$ such that $\lambda_1 \leq \cdots \leq \lambda_N$. We can verify that
  \begin{equation*}
  \begin{aligned}
  \lambda_1 N \leq \underset{\bs{s} \in \Omega^N}{\text{max}} \enspace \bs{s^H R s} \leq \lambda_N N.   
  \end{aligned}
  \end{equation*} 
  The above upper bound on the optimal solution ($\lambda_N N$) will be used in the following discussions.
  \begin{definition} 
   In this study, a complex vector $\bs{a}$ is said to be \emph{matching} a 
  complex vector $\bs{b}$ when $\arg(\bs{a}(i)) = \arg(\bs{b}(i))$ for all $i$, where 
  $\bs{a}(i)$ and $\bs{b}(i)$ are the $i$th elements of the vectors $\bs{a}$ and $\bs{b}$ respectively, and $\arg(x)$ represents the argument
  of a complex variable $x$. 
  \end{definition}

  Without loss of generality, we assume that $\bs{R}$ is positive semi-definite. If $\bs{R}$ is not positive semi-definite, 
  we can turn it into one with diagonal loading technique without changing the optimal solution to UQP, i.e., we do the following $\bs{R} = \bs{R} - \lambda_1 \mathbb{I}_{N}$,
  where $\lambda_1$ ($< 0$ as $\bs{R}$ is not semi-definite) is the smallest eigenvalue of $\bs{R}$. Let $\bs{R}$ be diagonalized as follows: $\bs{R} = \bs{U \Lambda U^H}$, where $\bs{\Lambda}$ is a diagonal matrix with eigenvalues $(\lambda_1,\ldots,\lambda_N)$ of $\bs{R}$ as the diagonal elements, and $\bs{U}$ is a unitary matrix 
  with the corresponding eigenvectors as its columns. Let $\bs{U} = [\bs{e}_1 \ldots \bs{e}_N]$, where $\bs{e}_i$ is the eigenvector corresponding to the eigenvalue 
  $\lambda_i$. Thus, the UQP expression can be written as: $\bs{s^H R s} = \bs{s^H U \Lambda U^H s} = \sum_{i=1}^N \lambda_i |t(i)|^2$, 
  where $t(i)$ is the $i$th element of $\bs{U^Hs}$, and $| . |$ is the modulus of a complex number. We know that $\sum_{i=1}^N |t(i)|^2 = N$ for all $s \in \Omega^N$. Ideally, the UQP objective function would be maximum if we can find an $\bs{s}$ such that $|t(N)|^2 = N$ and $|t(i)| = 0$ for all $i<N$; but for any given $\bs{R}$ such an $\bs{s}$ may not exist. Inspired by the above observation, we present the following heuristic method to solve the UQP problem approximately. 
  
  We choose an $\bs{s} \in \Omega^N$ that maximizes the last term in the above summation $|t(N)|$. In other words, we choose an $\bs{s} \in \Omega^N$ that ``matches'' (see the definition presented earlier) $\bs{e}_N$---the dominant eigenvector of $\bs{R}$. We call this method \textit{dominant-eigenvector-matching}. But $\bs{e}_N$ may contain zero elements, and when this happens we set the corresponding entry in the solution vector to $e^{j0}$. The following proposition provides a performance guarantee for this method. Hereafter, this heuristic method is represented by $\mathcal{D}$. The following result provides a performance bound for $\mc{D}$.    
  \begin{proposition}\label{theorem:heur_perf}
   Given a Hermitian and positive semi-definite matrix $\bs{R}$, if $V_{\mathcal{D}}$ and $V_{opt}$ represent the objective function values from the heuristic method 
   $\mathcal{D}$ and the optimal solution respectively for the UQP problem, then
   \begin{equation*}
   \frac{V_{\mathcal{D}}}{V_{opt}} \geq \frac{\lambda_N + (N-1)\lambda_1}{\lambda_N N},
  \end{equation*} 
  where $\lambda_1$ and $\lambda_N$ are the smallest and the largest eigenvalues of $\bs{R}$ of size $N$.
  \end{proposition}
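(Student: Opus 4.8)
The plan is to establish the lower bound $V_{\mathcal{D}} \geq \lambda_N + (N-1)\lambda_1$ and then simply divide by the upper bound $V_{opt} \leq \lambda_N N$ derived just before the definition. Since the claimed ratio already carries $\lambda_N N$ in its denominator, these two ingredients combine at once, so essentially all of the work goes into lower-bounding $V_{\mathcal{D}}$.

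First I would evaluate the top coordinate $t(N)$ explicitly. Writing $\bs{s}_{\mathcal{D}}$ for the vector returned by $\mathcal{D}$, we have $\bs{s}_{\mathcal{D}}(k) = e^{j\arg(\bs{e}_N(k))}$ by construction (with the convention $e^{j0}$ on zero entries), so
\[
t(N) = \bs{e}_N^H \bs{s}_{\mathcal{D}} = \sum_{k=1}^N |\bs{e}_N(k)|\, e^{-j\arg(\bs{e}_N(k))} e^{j\arg(\bs{e}_N(k))} = \sum_{k=1}^N |\bs{e}_N(k)|,
\]
which is real and equals the $\ell_1$ norm of $\bs{e}_N$. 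Since $\bs{e}_N$ is a unit eigenvector, $\sum_k |\bs{e}_N(k)|^2 = 1$, which forces $|\bs{e}_N(k)| \leq 1$ for every $k$; hence $|\bs{e}_N(k)| \geq |\bs{e}_N(k)|^2$, and summing gives $\sum_k |\bs{e}_N(k)| \geq 1$. Therefore $|t(N)|^2 \geq 1$. This is the crux of the argument: phase-matching converts the otherwise complex inner product into a sum of magnitudes that cannot drop below the vector's unit $\ell_2$ norm.

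Next I would sandwich the objective. Using the spectral expansion $V_{\mathcal{D}} = \sum_{i=1}^N \lambda_i |t(i)|^2$ together with $\sum_{i=1}^N |t(i)|^2 = N$, I isolate the dominant term and bound the remaining terms below by $\lambda_1$:
\[
V_{\mathcal{D}} = \lambda_N |t(N)|^2 + \sum_{i=1}^{N-1} \lambda_i |t(i)|^2 \geq \lambda_N |t(N)|^2 + \lambda_1\bigl(N - |t(N)|^2\bigr) = \lambda_1 N + (\lambda_N - \lambda_1)|t(N)|^2.
\]
Because $\bs{R}$ is positive semi-definite, $\lambda_N \geq \lambda_1 \geq 0$, so the coefficient $\lambda_N - \lambda_1$ is nonnegative and the right-hand side is increasing in $|t(N)|^2$; substituting $|t(N)|^2 \geq 1$ yields $V_{\mathcal{D}} \geq \lambda_1 N + (\lambda_N - \lambda_1) = \lambda_N + (N-1)\lambda_1$. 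Dividing by $V_{opt} \leq \lambda_N N$ then completes the proof.

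I do not anticipate a serious obstacle, as every step is elementary once the key observation is in place. The one point needing care is the lower bound $|t(N)|^2 \geq 1$: it rests on the componentwise inequality $|\bs{e}_N(k)| \leq 1$ (a consequence of unit $\ell_2$ norm), not on Cauchy--Schwarz, which only gives the upper bound $\sum_k |\bs{e}_N(k)| \leq \sqrt{N}$ in the wrong direction. I would also remark that the zero-entry convention is harmless, since a vanishing component of $\bs{e}_N$ contributes nothing to $t(N)$ regardless of the phase assigned to $\bs{s}_{\mathcal{D}}$.
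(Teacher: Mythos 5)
Your proposal is correct and follows essentially the same route as the paper's own proof: spectral expansion of $V_{\mathcal{D}}$, the key observation that phase-matching makes $|\bs{e}_N^H \bs{d}|$ equal the $\ell_1$ norm of the unit eigenvector and hence $|t(N)|^2 \geq 1$, lower-bounding the remaining spectral mass by $\lambda_1\left(N - |t(N)|^2\right)$, and dividing by $V_{opt} \leq \lambda_N N$. The only differences are cosmetic (you justify $\sum_k |\bs{e}_N(k)| \geq 1$ via $|\bs{e}_N(k)| \leq 1$ rather than by squaring the sum, and you make explicit the monotonicity in $|t(N)|^2$ that the paper uses implicitly).
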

 
  \begin{proof}
  Let $\bs{d}$ be the solution obtained from the heuristic algorithm $\mathcal{D}$. Therefore, the objective function value from $\mathcal{D}$ is
  \begin{equation*}
     V_{\mathcal{D}} =  \bs{d}^H \bs{R} \bs{d} = \sum_{i=1}^N \lambda_i |\bs{e}_i^H \bs{d}|^2,
  \end{equation*}
  where $\lambda_1,\ldots,\lambda_N$ ($\lambda_1 \leq \ldots \leq \lambda_N$) are the eigenvalues of $\bs{R}$ with $\bs{e}_1,\ldots,\bs{e}_N$ being the 
  corresponding eigenvectors. Since $\bs{d}$ matches the dominant eigenvector of $\bs{R}$, we know that 
  \begin{equation*}
  |\bs{e}_N^H \bs{d}|^2 = \left( \sum_{i=1}^N |\bs{e}_N(i)|\right)^2 \geq \sum_{i=1}^N |\bs{e}_N(i)|^2 = 1.
  \end{equation*}
  We know that 
  \begin{equation*}
  \sum_{i=1}^N |\bs{e}_i^H \bs{d}|^2  = \norm{\bs{U}^H\bs{d}}_2^2 = \norm{\bs{d}}_2^2 = N,                  
  \end{equation*}
  where $\norm{\cdot}_2$ represents the 2-norm. Thus,
  \begin{equation*}
  \begin{aligned}
   \frac{V_{\mathcal{D}}}{V_{opt}} &\geq \frac{\lambda_N(|\bs{e}_N^H \bs{d}|^2) + \lambda_1(N-|\bs{e}_N^H \bs{d}|^2)}{\lambda_N N}\\
   &\geq \frac{\lambda_N + (N-1)\lambda_1}{\lambda_N N}.
  \end{aligned}
  \end{equation*}
  \qed
  \end{proof}
  
  The above heuristic method has polynomial complexity as most eigenvalue algorithms (to find the dominant eigenvector) have a computational complexity of at most $\mathcal{O}(N^3)$ \cite{Victor_EigenProb}, e.g., QR algorithm. 
  
  \section{GREEDY STRATEGY}\label{sec:greedy_heur}
  In this section, we present the second heuristic method, which is a \textit{greedy} strategy, and has polynomial-time complexity (with respect to $N$). We also explore the possibility of our objective function possessing a property called string submodularity \cite{Zhenliang_CDC,Zhenliang_IEEEAuto}, which allows our \textit{greedy} method to exhibit a performance guarantee of $(1-1/e)$. First, we describe the \textit{greedy} method, and then explore the possibility of our objective function being string-submodular. Let $\bs{g}$ represent the solution from this greedy strategy, which is obtained iteratively as follows:
  \begin{equation}\label{eq:greedy}
  \begin{aligned}
    \bs{g}(k+1) &= \arg\max_{x\in\Omega} \enspace [\bs{g}(1), \ldots, \bs{g}(k), x]^H \bs{R}_k\\
    & [\bs{g}(1), \ldots, \bs{g}(k), x], \\ 
    &k=1,\ldots,N-1
   \end{aligned}
  \end{equation}
  where $\bs{g}(k)$ is the $k$th element of $\bs{g}$ with $\bs{g}(1) = 1$. In the above expression, $[a,b]$ represents a column vector with elements $a$ and $b$, and $\bs{R}_k$ is the principle sub-matrix of $\bs{R}$ obtained by retaining the first $k$ rows and the first $k$ columns of $\bs{R}$. This method is also described in Algorithm~\ref{alg:greedy}. 
  \begin{algorithm}
  \caption{Greedy Method}\label{alg:greedy}
  \begin{algorithmic}[1]

  \State $\bs{g}(1) \gets 1$ (initialization)
  \State $k \gets 2$
  \State $N \gets \text{length of the code sequence}$ 

  \BState \emph{loop}:
  \If {$k \leq N$}
  \State {$\begin{aligned} \bs{g}(k) \gets \arg\max_{x\in\Omega}  \enspace [\bs{g}(1), \ldots, & \bs{g}(k), x]^H \bs{R}_k \\ & [\bs{g}(1), \ldots, \bs{g}(k), x] \end{aligned}$}
  \State $k \gets k+1$
  \EndIf

  \State \textbf{goto} \emph{loop}.

  \end{algorithmic}
  \end{algorithm}
  
  In other words, we optimize the unimodular sequence element-wise with a partitioned representation of the objective function as shown in (\ref{eq:greedy}), which suggests that the computational complexity grows as $\mathcal{O}(N)$. Let this heuristic method be represented by $\mathcal{G}$. 
  
  The greedy method $\mathcal{G}$ is known to exhibit a performance guarantee of $(1-1/e)$ when the objective function possesses a property called string-submodularity \cite{Zhenliang_CDC,Zhenliang_IEEEAuto,Nemhauser}. To verify if our objective function has this property, we need to re-formulate our problem, which requires certain definitions as described below.
  
  We define a set $A^*$ that contains all possible unimodular strings (finite sequences) of length up to $N$, i.e.,
  \begin{equation*}
  \begin{aligned}
   A^* &= \{(s_1,\ldots,s_k)| s_i \in \Omega\; \text{for}\; \\ 
   &i=1,\ldots,k\; \text{and} \;k=1,\ldots,N\},
   \end{aligned}
  \end{equation*}
  where $\Omega = \{ x \in \mathbb{C}, |x| = 1 \}$. Notice that all the unimodular sequences of length $N$ in the UQP problem are elements in the set $A^*$. 
  For any given Hermitian matrix $\bs{R}$ of size $N$, let $f:A^*\rightarrow \mathbb{R}$ be a quadratic function defined as $f(A) = A^H \bs{R}_k A$,
  where $A = (s_1,\ldots,s_k) \in A^*$ for any $1 \leq k \leq N$, and $\bs{R}_k$ is the principle sub-matrix of $\bs{R}$ of size $k\times k$ as defined before. We represent string concatenation by $\oplus$, i.e., if $A = (a_1,\ldots,a_k) \in A^*$ and $B = (b_1,\ldots,b_r) \in A^*$ for any 
  $k+r \leq N$, then $A\oplus B = (a_1,\ldots,a_k,b_1,\ldots,b_r)$. A string $B$ is said to be contained in $A$, represented by $B \preceq A$ if there exists a $D \in A^*$ 
  such that $A = B\oplus D$. For any $A,B \in A^*$ such that $B \preceq A$, a function $f:A^* \rightarrow \mathbb{R}$ is said to be string-submodular \cite{Zhenliang_CDC,Zhenliang_IEEEAuto} if both the following conditions are true:
   \begin{enumerate}
    \item $f$ is \emph{forward monotone}, i.e., $f(B) \leq f(A)$.
    \item $f$ has the diminishing-returns property, i.e., $f(B\oplus (a)) - f(B) \geq f(A \oplus (a)) - f(A)$ for any $a \in \Omega$.
   \end{enumerate}
  
  Now, going back to the original UQP problem, the UQP quadratic function may not be a string-submodular function for any given Hermitian matrix $\bs{R}$. However, without loss of generality, we will show that we can transform $\bs{R}$ to $\ol{\bs{R}}$ (by manipulating the diagonal entries) such that the resulting quadratic 
  function $A_k^H \ol{\bs{R}}_k A_k$ for any $1\leq k\leq N$ and $A_k \in A^*$ is string-submodular, where $\ol{\bs{R}}_k$ is the principle sub-matrix of 
  $\ol{\bs{R}}$ of size $k\times k$ as defined before. The following algorithm shows a method to transform $\bs{R}$ to such a $\ol{\bs{R}}$ that induces string-submodularity on the UQP problem.
  \begin{enumerate}
   \item First define $\delta_1,\ldots,\delta_N$ as follows:
   \begin{equation}\label{eq:deltas}
    \delta_k = \sum_{i=1}^{k-1} |r_{ki}|,
   \end{equation}
   where $k= 2,\ldots,N$, $\delta_1 = 0$, and $|r_{ki}|$ is the modulus of the entry in the $k$th row and the $i$th column of $\bs{R}$.   
  \item Define a vector with $N$ entries $(a_1,\ldots,a_N)$, where $a_k = 2\delta_k + 4\left(\sum_{i=1}^{N-k} \delta_{k+i}\right)$ for $k=1,\ldots,N-1$, and $a_N = 2\delta_N$. 
  \item Define $\ol{\bs{R}}$ as follows:
  \begin{equation}\label{eq:Rbar_def}
   \ol{\bs{R}} = \bs{R} - Diag(\bs{R}) + diag((a_1,\ldots,a_N)),
  \end{equation}
  where $Diag(\bs{R})$ is a diagonal matrix with diagonal entries the same as those of $\bs{R}$ in the same order, and $diag((a_1,\ldots,a_N))$ is a diagonal matrix with diagonal
  entries equal to the array $(a_1,\ldots,a_N)$ in the same order. 
  \end{enumerate}
  Since we only manipulate the diagonal entries of $\bs{R}$ to derive $\ol{\bs{R}}$, the following is true:   
  \begin{equation*}
   \arg\max_{A_N\in \Omega^N} \enspace A_N^H \ol{\bs{R}} A_N = \arg\max_{A_N \in \Omega^N} \enspace A_N^H \bs{R} A_N.
  \end{equation*}
  
  For any given Hermitian matrix $\bs{R}$ and the derived $\ol{\bs{R}}$ (as shown above), let $F:A^*\rightarrow \mathbb{R}$ be defined as 
  \begin{equation}\label{eq:obj_Rbar}
   F(A_k) = A_k^H \ol{\bs{R}}_k A_k,
  \end{equation}
  where $A_k \in A^*$. 
  
  \begin{lemma}\label{lemma:dim_ret_bnds}
  For a given $\bs{R}$ and $F:A^*\rightarrow \mathbb{R}$ as defined in (\ref{eq:obj_Rbar}) with the derived $\ol{\bs{R}}$ from $\bs{R}$, and for any 
  $A,B \in A^*$ such that $B \preceq A$, with $B = (b_1,\ldots,b_k)$ and $A = (b_1,\ldots,b_k,\ldots,b_l)$ ($k\leq l \leq N$), the inequalities
  \small
  \begin{equation*}
   4\sum_{i=k+1}^l \sum_{j=1}^{N-i} \delta_{i+j} \leq F(A) - F(B) \leq 
   4\sum_{i=k+1}^l \left( \delta_i + \sum_{j=1}^{N-i} \delta_{i+j}\right)
  \end{equation*}
  \normalsize
  hold where $\delta_i$ for $i=1,\ldots,N$ are defined in (\ref{eq:deltas}). 
  \end{lemma}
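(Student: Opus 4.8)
The plan is to compute the increment $F(A)-F(B)$ directly as a difference of two quadratic forms, and then split it into a diagonal part, governed exactly by the modified diagonal entries $a_i$, and an off-diagonal part, which I bound in absolute value using the Hermitian structure. Writing $A=(b_1,\ldots,b_l)$ and $B=(b_1,\ldots,b_k)$ with $k\leq l$, I first note that $F(A)=\sum_{p,q=1}^{l}\ol{b_p}\,\ol{r}_{pq}\,b_q$ and $F(B)=\sum_{p,q=1}^{k}\ol{b_p}\,\ol{r}_{pq}\,b_q$, where $\ol{r}_{pq}$ denotes the $(p,q)$ entry of $\ol{\bs{R}}$. Subtracting, every surviving term carries at least one index in $\{k+1,\ldots,l\}$, so $F(A)-F(B)$ collects precisely those entries of $\ol{\bs{R}}_l$ that touch a new row or column.

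First I would isolate the diagonal contribution. Since $\ol{\bs{R}}$ and $\bs{R}$ differ only on the diagonal, with $\ol{r}_{ii}=a_i$ real, and since $|b_i|^2=1$ for every unimodular entry, the new diagonal entries contribute exactly $\sum_{i=k+1}^{l}a_i$. Substituting the definition $a_i=2\delta_i+4\sum_{j=i+1}^{N}\delta_j$ (obtained from the prescription above after the re-indexing $\sum_{j=1}^{N-i}\delta_{i+j}=\sum_{j=i+1}^{N}\delta_j$) turns this diagonal part into the backbone of both claimed bounds.

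Next I would handle the off-diagonal part. Using the Hermitian identity $\ol{r}_{qp}=\overline{\ol{r}_{pq}}$, I would pair each term with its transpose and write the off-diagonal contribution as $\sum_{q=k+1}^{l}\sum_{p=1}^{q-1}2\,\mathrm{Re}\!\bigl(\ol{b_p}\,r_{pq}\,b_q\bigr)$; this indexing is the key bookkeeping step, since for each new index $q$ the inner sum over $p=1,\ldots,q-1$ captures both the new--old and new--new interactions exactly once, with no double counting. Because each $b_p,b_q$ lies on the unit circle, $|\mathrm{Re}(\ol{b_p}\,r_{pq}\,b_q)|\leq|r_{pq}|$, and summing over $p=1,\ldots,q-1$ gives precisely $\delta_q$ by definition (note that all columns $p<q\leq l$ are present, so the full $\delta_q$ is recovered rather than a truncated version). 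Hence the off-diagonal part lies in $[-2\sum_{q=k+1}^{l}\delta_q,\ 2\sum_{q=k+1}^{l}\delta_q]$.

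Combining the two parts, $F(A)-F(B)=\sum_{i=k+1}^{l}a_i+(\text{off-diagonal})$, and I would add or subtract the off-diagonal envelope from the diagonal backbone: the two copies of $2\delta_i$ either cancel, yielding the lower bound $4\sum_{i=k+1}^{l}\sum_{j=i+1}^{N}\delta_j$, or reinforce, yielding the upper bound $4\sum_{i=k+1}^{l}\bigl(\delta_i+\sum_{j=i+1}^{N}\delta_j\bigr)$; after the re-indexing $\sum_{j=i+1}^{N}\delta_j=\sum_{j=1}^{N-i}\delta_{i+j}$ these are exactly the stated inequalities. I expect the main obstacle to be purely organizational: correctly identifying which entries of the enlarged quadratic form survive in $F(A)-F(B)$ and matching the index ranges so that the inner $p$-sums reconstruct the full $\delta_q$. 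The factor of $4$ versus $2$ and the asymmetry between the upper and lower bounds both hinge on this bookkeeping being exact.
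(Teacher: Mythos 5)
Your proof is correct and takes essentially the same route as the paper's: expand $F(A)-F(B)$ into the newly added diagonal entries $a_i$ plus the paired off-diagonal cross terms $2\,\mathrm{Re}(b_p^* \,r_{pq}\, b_q)$, bound the latter in absolute value by $2\sum_{i=k+1}^{l}\delta_i$ using unimodularity and Hermitian symmetry, and then substitute $a_i = 2\delta_i + 4\sum_{j=1}^{N-i}\delta_{i+j}$. If anything, your bookkeeping is slightly more careful than the paper's, whose final displayed inequality contains a typo (the lower bound is misprinted as identical to the upper bound); your version states the intended bounds correctly.
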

  
  \begin{proof}
  Let $r_{ij}$ be the entry from $\ol{\bs{R}}$ at the $i$th row and $j$th column. From (\ref{eq:Rbar_def}), we can verify that $r_{ii} = a_i$ for $i=1,\ldots,N$. 
  Therefore, from the definitions of $F$ and $a_i$ for $i=1,\ldots,N$, we can verify that 
   \begin{equation*}
   \begin{split}
     F(A) - F(B) & = A^H \ol{\bs{R}}_l A - B^H \ol{\bs{R}}_k B \\ 
		& = \sum_{i=k+1}^l r_{ii}b_i b_i^* + b_i^*\left( \sum_{j=1}^{i-1} r_{ij} b_j \right)\\ 
		& + b_i \left( \sum_{j=1}^{i-1} r_{ji}b_j^*\right)
   \end{split}
   \end{equation*}
  Therefore, from the definitions of $\delta_i$ for $i=1,\ldots,N$ in (\ref{eq:deltas}), it follows that 
  \begin{equation*}
    \begin{aligned} 
     \sum_{i=k+1}^l \left( r_{ii} - 2\left(\sum_{j=1}^{i-1} |r_{ij}| \right) \right) &\leq F(A) - F(B)  \\
     &\leq \sum_{i=k+1}^l \left(r_{ii} + 2\left(\sum_{j=1}^{i-1} |r_{ij}| \right) \right)
    \end{aligned}
   \end{equation*}
   \begin{equation*}
    \begin{aligned} 
     4\sum_{i=k+1}^l \left( \delta_i + \sum_{j=1}^{N-i} \delta_{i+j}\right) &\leq F(A) - F(B) \\ 
     &\leq 4\sum_{i=k+1}^l \left( \delta_i + \sum_{j=1}^{N-i} \delta_{i+j}\right)
    \end{aligned}
   \end{equation*}
  \qed 
  \end{proof}
  
  \begin{lemma}\label{lemma:stringsubmod}
  Given any Hermitian matrix $\bs{R}$ of size $N$, the objective function $F:A^*\rightarrow \mathbb{R}$ defined in (\ref{eq:obj_Rbar}) is string submodular.   
  \end{lemma}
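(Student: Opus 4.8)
The plan is to verify the two defining conditions of string-submodularity—forward monotonicity and diminishing returns—directly from the two-sided estimate on $F(A)-F(B)$ furnished by Lemma~\ref{lemma:dim_ret_bnds}. The key structural fact I would lean on is that every $\delta_i$ in (\ref{eq:deltas}) is a sum of moduli and hence nonnegative, so the bounds in that lemma are assembled entirely from nonnegative quantities. This is what makes both conditions fall out almost mechanically.

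For forward monotonicity, I would take arbitrary $B \preceq A$ with $B=(b_1,\ldots,b_k)$ and $A=(b_1,\ldots,b_l)$, $k\leq l\leq N$, and simply invoke the lower bound of Lemma~\ref{lemma:dim_ret_bnds}, namely $F(A)-F(B)\geq 4\sum_{i=k+1}^{l}\sum_{j=1}^{N-i}\delta_{i+j}\geq 0$, where the final inequality is just nonnegativity of the $\delta_i$. Thus $F(B)\leq F(A)$, giving condition~1.

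For the diminishing-returns property, I would fix $a\in\Omega$ and $B\preceq A$ as above with $l\leq N-1$, so that both extensions remain in $A^*$. The idea is to estimate the two marginal gains in opposite directions by applying Lemma~\ref{lemma:dim_ret_bnds} to the single-element extensions $B\mapsto B\oplus(a)$ and $A\mapsto A\oplus(a)$. In the first extension only the index $i=k+1$ survives, so the lemma's lower bound collapses to $F(B\oplus(a))-F(B)\geq 4\sum_{p=k+2}^{N}\delta_p$; in the second only $i=l+1$ survives, so the lemma's upper bound collapses to $F(A\oplus(a))-F(A)\leq 4\big(\delta_{l+1}+\sum_{p=l+2}^{N}\delta_p\big)=4\sum_{p=l+1}^{N}\delta_p$. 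The required inequality $F(B\oplus(a))-F(B)\geq F(A\oplus(a))-F(A)$ then reduces to the tail comparison $\sum_{p=k+2}^{N}\delta_p\geq\sum_{p=l+1}^{N}\delta_p$. This holds because strict containment $k<l$ forces $k+2\leq l+1$, so the left-hand tail contains the right-hand tail together with the extra nonnegative terms $\delta_{k+2},\ldots,\delta_{l}$. When $B=A$ (i.e.\ $k=l$) the two marginal gains coincide and the inequality is trivially an equality.

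The one place demanding care—what I would treat as the main obstacle—is the index bookkeeping when specializing Lemma~\ref{lemma:dim_ret_bnds} to a single appended symbol: one must correctly collapse the double sums to the single tails $\sum_{p=k+2}^{N}\delta_p$ and $\sum_{p=l+1}^{N}\delta_p$, and then confirm the inclusion $k+2\leq l+1$. That inclusion is precisely where the strict containment $k<l$ is used, and it is the crux of the whole argument; once it is in place, nonnegativity of the $\delta_i$ closes the gap and both string-submodularity conditions are established.
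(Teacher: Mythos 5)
Your proof is correct and takes essentially the same route as the paper's: forward monotonicity follows from the lower bound of Lemma~\ref{lemma:dim_ret_bnds}, and diminishing returns follows by pairing that lemma's lower bound on the marginal gain at $B$ with its upper bound on the marginal gain at $A$, reducing everything to a tail comparison of the nonnegative $\delta_i$'s. Your explicit treatment of the $k=l$ corner case (where the bound-difference alone would give only $-4\delta_{k+1}$, but the marginal gains trivially coincide) is in fact slightly more careful than the paper's own chain of inequalities, which silently assumes $k<l$.
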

  
  \begin{proof} \underline{Forward monotonocity proof}\\ 
  Let $A,B \in A^*$ such that $B \preceq A$, therefore $A$ and $B$ are of the form $B = (b_1,\ldots,b_k)$ and $A = (b_1,\ldots,b_k,\ldots,b_l)$, where $k \leq l \leq N$. Thus,
  from Lemma~\ref{lemma:dim_ret_bnds}
  \begin{equation*}
    F(A) - F(B) \geq 4 \sum_{i=k+1}^l \sum_{j=1}^{N-i} \delta_{i+j} \geq 0
  \end{equation*}
  as $\delta_i \geq 0$ for all $i=1,\ldots,N$.
  
  \underline{Diminishing returns proof}\\
  For any $u \in \Omega$, $u$ is also an element in the set $A^*$, i.e., $\{u\} \in A^*$. Therefore, from Lemma~\ref{lemma:dim_ret_bnds} the
  following inequalities hold true:
  \begin{equation*}
   \begin{aligned}
    &\left(F(B\oplus\{u\}) - F(B)\right) - \left(F(A\oplus\{u\}) - F(A)\right) \\ 
    &\geq 4\left(\sum_{j=1}^{N-k-1} \delta_{j+k+1}\right) - 4\left( \delta_{l+1} + \sum_{j=1}^{N-l-1} \delta_{l+j+1}\right)\\
    &= 4\left( \sum_{j=k+2}^{l+1} \delta_j\right) - 4\delta_{l+1} \geq 0.
   \end{aligned}
  \end{equation*}
  \qed
  \end{proof}
    
  We know from \cite{Zhenliang_CDC,Zhenliang_IEEEAuto} that the performance of the heuristic method $\mathcal{G}$ is at least $(1-1/e)$ of the optimal value with respect to the function $F$, i.e., if $\bs{g}\in A^*$ is the solution from the heuristic method $\mathcal{G}$ and if $\bs{o}$ is the optimal solution that maximizes the objective function $F$ as in 
  \begin{equation}\label{eq:optseq_rbar}
   \bs{o} = \arg\max_{A_N \in \Omega^N} A_N^H \ol{\bs{R}}A_N,
  \end{equation}
  then 
  \begin{equation}\label{eq:submod_guar}
   F(\bs{g}) \geq \left(1-\frac{1}{e}\right)F(\bs{o}). 
  \end{equation}
  Although we have a performance guarantee for the greedy method with respect to $F$, we are more interested in the performance
  guarantee with respect to the original UQP quadratic function with the given matrix $\bs{R}$. We explore this idea with the following results. 
  
  \begin{remark}\label{remark:RbarExp}
  For any Hermitian matrix $\bs{R}$, if $\ol{\bs{R}}$ is derived from $\bs{R}$ according to (\ref{eq:Rbar_def}), then $Tr(\ol{\bs{R}}) = \sum_{k=2}^N (4k-2)\delta_{k}$, where $\delta_k$ for $k=2,\ldots,N$ are defined in (\ref{eq:deltas}).
  \end{remark}
      
  \begin{theorem}\label{theorem:perf_bound}
  For a given Hermitian matrix $\bs{R}$, if $Tr(\ol{\bs{R}}) \leq Tr(\bs{R})$, where $\ol{\bs{R}}$ is derived from $\bs{R}$ as described earlier in this section, then 
  \begin{equation*}
   \bs{g}^H \bs{R} \bs{g} \geq \left(1-\frac{1}{e}\right) \left( \max_{\bs{s} \in \Omega^N} \enspace \bs{s}^H \bs{R}\bs{s} \right),
  \end{equation*}
  where $\bs{g}$ is the solution from the greedy method $\mathcal{G}$. 
  \end{theorem}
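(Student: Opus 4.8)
The plan is to transfer the $(1-1/e)$ guarantee established for $F$ (equivalently, for $\ol{\bs{R}}$) in (\ref{eq:submod_guar}) over to the original matrix $\bs{R}$, exploiting the fact that $\ol{\bs{R}}$ differs from $\bs{R}$ only on the diagonal. First I would observe that for any unimodular vector $\bs{s}\in\Omega^N$ the two quadratic forms differ only by a constant. Because every entry of $\bs{s}$ has unit modulus, $\bs{s}^H Diag(\bs{R})\bs{s}=Tr(\bs{R})$ and $\bs{s}^H diag((a_1,\ldots,a_N))\bs{s}=\sum_{i=1}^N a_i = Tr(\ol{\bs{R}})$, the last equality holding since the $a_i$ are exactly the diagonal entries of $\ol{\bs{R}}$ by (\ref{eq:Rbar_def}). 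Combining these gives the key identity
\[
\bs{s}^H\ol{\bs{R}}\,\bs{s}=\bs{s}^H\bs{R}\,\bs{s}+\bigl(Tr(\ol{\bs{R}})-Tr(\bs{R})\bigr)
\]
for all $\bs{s}\in\Omega^N$. Writing $c=Tr(\ol{\bs{R}})-Tr(\bs{R})$, the two objectives agree up to the additive constant $c$ over the entire feasible set, which is consistent with the earlier observation that the maximizer $\bs{o}$ of $A_N^H\ol{\bs{R}}A_N$ is simultaneously a maximizer of $\bs{s}^H\bs{R}\,\bs{s}$; hence $\bs{o}^H\bs{R}\,\bs{o}=\max_{\bs{s}\in\Omega^N}\bs{s}^H\bs{R}\,\bs{s}=:V_{opt}$.

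Next I would substitute this constant shift into the submodularity bound. Since $\bs{g},\bs{o}\in\Omega^N$, we have $F(\bs{g})=\bs{g}^H\bs{R}\,\bs{g}+c$ and $F(\bs{o})=V_{opt}+c$, so (\ref{eq:submod_guar}) becomes $\bs{g}^H\bs{R}\,\bs{g}+c\geq\bigl(1-\tfrac{1}{e}\bigr)(V_{opt}+c)$. Rearranging yields
\[
\bs{g}^H\bs{R}\,\bs{g}\geq\Bigl(1-\frac{1}{e}\Bigr)V_{opt}-\frac{1}{e}\,c .
\]
The final step is to invoke the hypothesis $Tr(\ol{\bs{R}})\leq Tr(\bs{R})$, i.e.\ $c\leq 0$, so that $-c/e\geq 0$ and the claimed bound $\bs{g}^H\bs{R}\,\bs{g}\geq\bigl(1-\tfrac{1}{e}\bigr)V_{opt}$ follows immediately.

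The argument is short, and its only delicate point is this last step: a multiplicative guarantee need not survive an additive reparametrization of the objective in general. Here it does precisely because the shift $c$ is nonpositive, so that the $-c/e$ correction works in our favour rather than against us; the trace hypothesis $Tr(\ol{\bs{R}})\leq Tr(\bs{R})$ is exactly the condition that secures this sign. I therefore expect the main (modest) obstacle to be making the unit-modulus reductions $\bs{s}^H Diag(\bs{R})\bs{s}=Tr(\bs{R})$ and $\sum_i a_i=Tr(\ol{\bs{R}})$ fully explicit, so that the constant $c$ and its sign are unambiguous, after which the conclusion is purely algebraic.
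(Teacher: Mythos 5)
Your proposal is correct and follows essentially the same route as the paper: both rely on the identity $\bs{s}^H\ol{\bs{R}}\bs{s}=\bs{s}^H\bs{R}\bs{s}+Tr(\ol{\bs{R}})-Tr(\bs{R})$ for unimodular $\bs{s}$, substitute it into the string-submodularity guarantee (\ref{eq:submod_guar}), and rearrange so that the term $\frac{1}{e}\bigl(Tr(\bs{R})-Tr(\ol{\bs{R}})\bigr)$ (your $-c/e$) is nonnegative under the trace hypothesis. Your write-up merely makes explicit two points the paper leaves implicit, namely the unit-modulus computation behind the identity and the fact that $\bs{o}$ maximizes both quadratic forms simultaneously.
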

  
  \begin{proof}
  For any $A_k \in A^*$, we know that $F(A_k) = A_k^H \ol{\bs{R}}_k A_k = A_k^H \bs{R}_k A_k + Tr(\ol{\bs{R}}_k) - Tr(\bs{R}_k)$. Since $F$ in 
  (~\ref{eq:Rbar_def}) is string-submodular, given $\bs{g}$ is the solution from the heuristic $\mathcal{G}$ and $\bs{o}$ being the optimal solution that 
  maximizes $F$ over all possible solutions of length $N$, from (\ref{eq:submod_guar}) we know that
  \begin{equation*}
  F(\bs{g}) \geq \left(1 - \frac{1}{e}\right) F(\bs{o})
  \end{equation*}
  
  \begin{equation*}
  \begin{aligned}
  \bs{g}^H \bs{R} \bs{g} &+ Tr(\ol{\bs{R}}) - Tr(\bs{R}) \\
  &\geq \left(1 - \frac{1}{e}\right) \left(\bs{o}^H \bs{R} \bs{o} + Tr(\ol{\bs{R}}) - Tr(\bs{R})\right)\\
  \end{aligned}
  \end{equation*}
  
  \begin{equation}\label{eq:greedy_opt_comp}
  \begin{aligned}
  \bs{g}^H \bs{R} \bs{g} &\geq \left(1 - \frac{1}{e}\right)\bs{o}^H \bs{R} \bs{o} + \frac{1}{e}\left(Tr(\bs{R}) -Tr(\ol{\bs{R}}) \right) \\
  &\geq \left(1 - \frac{1}{e}\right)\bs{o}^H \bs{R} \bs{o}.
  \end{aligned}
  \end{equation}
  \qed
  \end{proof}
  
  We are interested in finding classes of Hermitian matrices that satisfy the requirement $Tr(\ol{\bs{R}}) \leq Tr(\bs{R})$, so that the above result holds true. Intuitively, it may seem that diagonally dominant matrices satisfy the above requirement. But it is easy to find a counter example $\bs{R} = \begin{bmatrix} 2 & 1\\1 & 2\end{bmatrix}$, where $Tr(\ol{\bs{R}}) = 6$ and $Tr(\bs{R}) = 4$. Clearly, diagonal dominance is not sufficient to guarantee that the result in Theorem~\ref{theorem:perf_bound} holds true. Thus, we introduce a new kind of diagonal dominance called $M$-dominance, which lets us finding a class of Hermitian matrices for which the result in Theorem~\ref{theorem:perf_bound} holds true. A square matrix $\bs{R} = [r_{ij}]_{N\times N}$ is said to be $M$-dominant if $r_{ii} \geq M \left(\sum_{j=1, j\neq i}^N |r_{ij}| \right); \forall i$. 
  
  \begin{proposition}\label{prop:2Ndom}
  If a Hermitian matrix $\bs{R}$ of size $N$ is $2N$-dominant, then $Tr(\ol{\bs{R}}) \leq Tr(\bs{R})$, where $\ol{\bs{R}}$ is derived from $\bs{R}$ according to (\ref{eq:Rbar_def}), and
   \begin{equation*}
   \bs{g}^H \bs{R} \bs{g} \geq \left( 1 - \frac{1}{e} + \frac{1}{e}\left(\frac{1}{2N+1}\right)\right) \left( \max_{\bs{s} \in \Omega^N} \enspace \bs{s}^H \bs{R}\bs{s} \right),
  \end{equation*}
  where $\bs{g}$ is the solution from the greedy method $\mathcal{G}$. 
  \end{proposition}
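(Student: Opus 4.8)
The plan is to prove the two assertions in turn, relying on Remark~\ref{remark:RbarExp}, the definition of $2N$-dominance, and the sharp intermediate bound (\ref{eq:greedy_opt_comp}) established inside the proof of Theorem~\ref{theorem:perf_bound}. Throughout I would write $S = \sum_{k=2}^N \delta_k = \sum_{1\le i<k\le N}|r_{ki}|$ for the sum of the moduli of the strictly below-diagonal entries of $\bs{R}$, so that $\sum_{i\ne j}|r_{ij}| = 2S$ by Hermitian symmetry.

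First I would settle the trace inequality. By Remark~\ref{remark:RbarExp}, $Tr(\ol{\bs{R}}) = \sum_{k=2}^N (4k-2)\delta_k$; since $4k-2\le 4N-2$ for every $k\le N$ and $\delta_k\ge 0$, this gives $Tr(\ol{\bs{R}})\le (4N-2)S$. Summing the $2N$-dominance inequality $r_{kk}\ge 2N\sum_{j\ne k}|r_{kj}|$ over $k$ yields $Tr(\bs{R})\ge 2N\sum_{i\ne j}|r_{ij}| = 4NS$. Combining, $Tr(\ol{\bs{R}})\le (4N-2)S< 4NS\le Tr(\bs{R})$, which proves the first claim and makes Theorem~\ref{theorem:perf_bound} applicable. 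The two estimates $Tr(\ol{\bs{R}})\le (4N-2)S$ and $Tr(\bs{R})\ge 4NS$ are the only facts I would carry forward.

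For the refined guarantee I would not invoke Theorem~\ref{theorem:perf_bound} as a black box but return to
\[
\bs{g}^H \bs{R}\bs{g} \ge \Big(1-\tfrac1e\Big)V_{opt} + \tfrac1e\big(Tr(\bs{R})-Tr(\ol{\bs{R}})\big),
\]
which is (\ref{eq:greedy_opt_comp}) with $V_{opt}=\bs{o}^H\bs{R}\bs{o}=\max_{\bs{s}\in\Omega^N}\bs{s}^H\bs{R}\bs{s}$ (recall $\ol{\bs{R}}$ and $\bs{R}$ share the same maximizer). It then suffices to show $Tr(\bs{R})-Tr(\ol{\bs{R}})\ge \frac{1}{2N+1}V_{opt}$. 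I would bound $V_{opt}$ from above by expanding $\bs{s}^H\bs{R}\bs{s}=Tr(\bs{R})+\sum_{i\ne j}s_i^* r_{ij}s_j$ (using $|s_i|=1$) and applying the triangle inequality, giving $V_{opt}\le Tr(\bs{R})+2S$. Using $Tr(\bs{R})-Tr(\ol{\bs{R}})\ge Tr(\bs{R})-(4N-2)S$ from the first part, the target reduces to $Tr(\bs{R})-(4N-2)S\ge \frac{1}{2N+1}\big(Tr(\bs{R})+2S\big)$, which after clearing the denominator collapses to $2N\,Tr(\bs{R})\ge 8N^2 S$, i.e. exactly $Tr(\bs{R})\ge 4NS$ — the summed $2N$-dominance bound already in hand. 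Substituting $Tr(\bs{R})-Tr(\ol{\bs{R}})\ge \frac{1}{2N+1}V_{opt}$ back into the displayed inequality gives the stated factor $1-\frac1e+\frac1e\frac{1}{2N+1}$.

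The routine parts are the two trace estimates and the expansion of $V_{opt}$; the one real subtlety — the step I would flag as the crux — is resisting the temptation to discard the $Tr(\bs{R})$ term when lower-bounding $Tr(\bs{R})-Tr(\ol{\bs{R}})$, which would only recover the weaker $\ge 2S$ and fail, since $V_{opt}$ can be of order $Tr(\bs{R})\gg S$. Keeping the full quantity $Tr(\bs{R})-(4N-2)S$ and pairing it against the matching upper bound $V_{opt}\le Tr(\bs{R})+2S$ is what makes the constant work out: one checks $(2N+1)(4N-2)+2=8N^2$, so the reduction lands precisely on the $2N$-dominance inequality with no slack, confirming that $2N$ is the natural threshold for this argument.
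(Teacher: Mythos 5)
Your proposal is correct and follows essentially the same route as the paper's own proof: both sum the $2N$-dominance inequality to get $Tr(\bs{R}) \geq 4N\sum_k \delta_k$, bound $Tr(\ol{\bs{R}})$ via $(4N-2)\sum_k\delta_k$, bound the optimum by $Tr(\bs{R}) + \sum_{i\neq j}|r_{ij}|$, and feed $Tr(\bs{R})-Tr(\ol{\bs{R}}) \geq \frac{1}{2N+1}\,\bs{o}^H\bs{R}\bs{o}$ back into (\ref{eq:greedy_opt_comp}); your algebra merely keeps $S$ explicit where the paper normalizes everything by $Tr(\bs{R})$. The only cosmetic blemish is the strict inequality $(4N-2)S < 4NS$, which fails when $S=0$ and should be $\leq$, though the conclusion is unaffected.
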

 \begin{proof}
 See Appendix. 
 \end{proof}
  Clearly, if $\bs{R}$ is $2N-$dominant, then the greedy method $\mc{G}$ provides a guarantee of $(1-1/e)$. From the above proposition, it is clear that for such a matrix, $\mc{G}$ can provide a tighter performance bound of $\left( 1 - \frac{1}{e} + \frac{1}{e}\left(\frac{1}{2N+1}\right)\right)$. But this bound quickly converges to $(1-1/e)$ as $N \rightarrow \infty$, as shown in Figure~\ref{fig:bnds_vs_N}. As it turns out, the above result may not have much practical significance, as it requires the matrix $\bs{R}$ to be $2N-$dominant, which narrows down the scope of the result. Moreover, as $N$ increases, the bound looses any significance because the lower bound on the UQP objective value for any solution is much greater than the above derived bound. In other words, if $\bs{R}$ is $2N-$dominant, the lower bound on the performance of any UQP solution $\bs{s}$ is given by $\bs{s}^H \bs{Rs} \geq \left(\frac{2N-1}{2N+1} \right) \bs{o}^H \bs{Rs}$, where $\bs{o}$ is the optimal solution for the UQP. Clearly, for $N > e$ (i.e., $N \geq 3$), $\frac{2N-1}{2N+1} > \left( 1 - \frac{1}{e} + \frac{1}{e}\left(\frac{1}{2N+1}\right)\right)$. Thus, $2N-$dominance requirement may be  a strong condition, and further investigation may be required to look for a weaker condition that satisfies $Tr(\ol{\bs{R}}) \leq Tr(\bs{R})$.
  
In summary, for applications with large $N$, the result in Proposition~\ref{prop:2Ndom} does not hold much significance, as can be seen from Figure~\ref{fig:bnds_vs_N}.      
  \begin{figure}[t]
  \includegraphics[width= \columnwidth, trim = 30 180 25 180,clip]{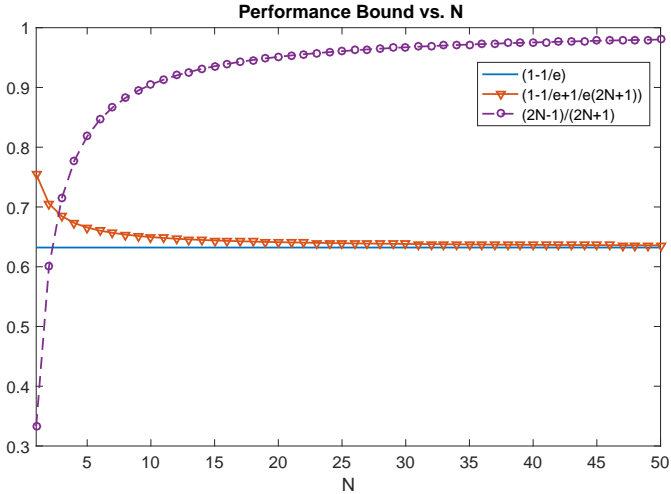}
  \caption{Performance bounds for 2N-dominant matrices vs. N}
  \label{fig:bnds_vs_N}
  \end{figure}   
  
  \section{Row-Swap Greedy Strategy}\label{sec:rowswap_greedy}
  In this section, we present the third heuristic method to solve the UQP approximately. Let $\bs{P}_{mn}$ be a row-switching transformation such that $\bs{P}_{mn}\bs{A}$ swaps the $m$th and $n$th rows of $\bs{A}$ and $\bs{A}\bs{P}_{mn}$ swaps the $m$th and $n$th columns of $\bs{A}$. Let $\mc{P} = \{\bs{P}_{mn} | m=1,\ldots,N; \, n=1,\ldots,N; \, m>n\}$ be a collection of all such matrices that are exactly one row-swap operation away from the identity matrix of size $N \times N$. In the UQP, if we replace $\bs{R}$ with $\bs{P} \bs{R} \bs{P}$ for any $\bs{P} \in \mc{P}$, and if
  \begin{equation}\label{UQP_perm}
  \begin{aligned}
  \hat{\bs{o}} =& \underset{\bs{s} \in \Omega^N}{\text{arg max}}
  & & \bs{s}^H \bs{PRP} \bs{s},
  \end{aligned}
  \end{equation}
  then we can relate $\hat{\bs{o}}$ to the optimal solution of the original UQP (\ref{UQP}) as follows: $\hat{\bs{o}}= \bs{Po}$. We also know that $\bs{o}^H \bs{Ro} = \hat{\bs{o}}^H \bs{R} \hat{\bs{o}}$, i.e., for any row-switching matrix $\bs{P} \in \mc{P}$, the optimal objective value does not change if we replace $\bs{R}$ by $\bs{PRP}$ in the UQP. However, the objective value from the greedy strategy changes if $\bs{R}$ in the UQP is replaced by $\bs{PRP}$. Thus, for a given UQP problem, we may be able to improve the performance of the greedy strategy by simply replacing $\bs{R}$ by $\bs{PRP}$ for some $\bs{P} \in \mc{P}$. We are interested in finding which matrix $\bs{P}$ among $N(N-1)/2$ possible matrices in $\mc{P}$ gives us the best performance from the greedy strategy (note that $|\mc{P}| = N(N-1)/2$). We know that each of the above-mentioned $N(N-1)/2$ objective values (one each from solving the UQP with $\bs{R}$ replaced by $\bs{PRP}$) is upper bounded by the optimal objective value of the original UQP. Clearly, the best performance from the greedy strategy can be obtained by simply picking a matrix from the collection $\mc{P} \bigcup \{\bs{I}_{N}\}$ ($\bs{I}_N$ is the identity matrix of size $N$) that gives maximum objective value. We call this method \emph{row-swap greedy strategy}. The motivation for using this strategy is that one of the $(N(N-1)/2) + 1$ row-switching matrices (including the identity matrix) moves us close to the global optimum. This method is also described in Algorithm~\ref{alg:row-greedy}. 
  \begin{algorithm}
  \caption{Row-Swap Greedy Method}\label{alg:row-greedy}
  \begin{algorithmic}[1]
  \State $\text{Greedy}(\bs{P})$ outputs the objective function value and the solution from the standard greedy method given the row-switching matrix $\bs{P}$, i.e., solves (\ref{UQP_perm})   
  \State $k \gets 1$
  \State $N \gets \text{length of the code sequence}$ 
  \State $M \gets (N(N-1)/2) + 1$ \Comment{Maximum number of row-switching matrices}
  \State $\{\bs{P}_1,\ldots,\bs{P}_M\} \gets \text{set of all possible row-switching matrices}$
  \State $\bs{V} \gets 0$ \Comment{Initialization}

  \BState \emph{loop}:
  \If {$k \leq M$}
  \State $[\bs{V}_{temp},\bs{g}_{temp}] \gets \text{Greedy}(\bs{P}_k)$ 
      \If {$\bs{V}_{temp} > \bs{V}$}
	 \State $\bar{\bs{g}} \gets \bs{g}_{temp}$
	 \State $\bs{V} \gets \bs{V}_{temp}$
      \EndIf
  \State $k \gets k+1$
  \EndIf

  \State \textbf{goto} \emph{loop}
  \State Solution from this method is stored in $\bar{\bs{g}}$
  \end{algorithmic}
  \end{algorithm}
  
  The objective value from the \emph{row-swap greedy method} is given by 
  \begin{equation}\label{eq:row-swap}
  \max_{\bs{P}\in \mc{P} \bigcup \{\bs{I}_{N}\}} V_{\bs{P}},
  \end{equation}
  where $V_{\bs{P}}$ is the objective value from the greedy strategy applied to the UQP with $\bs{R}$ replaced by $\bs{PRP}$. Clearly, the row-swap greedy strategy outperforms the ``greedy strategy" and provides a performance guarantee of $(1-1/e)$ as $\max_{\bs{P}\in \mc{P} \bigcup \{\bs{I}_{N}\}} V_{\bs{P}} \geq V_{\bs{I}_N} \geq (1-1/e) (\bs{o}^H \bs{Ro})$, where $\bs{o}$ is the optimal solution to the UQP. We note that the computational complexity for the \emph{row-swap greedy strategy} grows as $\mc{O}(N^2(N-1)/2)$. 
  
  \begin{remark}
  It is quite possible, but unlikely (confirmed by our numerical study in Section~\ref{sec:Simulation}), that the performance from the \emph{row-swap greedy method} may remain exactly the same as the standard \emph{greedy method}, which happens when row-switching does not improve the performance. In this case, the optimum solution to (\ref{eq:row-swap}) is $V_{\bs{I}_N}$.  
  \end{remark}
   
  \section{Application Examples}\label{sec:applic_examples}
  In the case of a monostatic radar that transmits a linearly encoded burst of pulses (as described in \cite{Mojtaba_UQP}), the problem of optimizing the code elements that maximize the SNR boils down to UQP, where $\mathrm{SNR} = |a|^2 \bs{c}^H \bs{R} \bs{c}$, $\bs{R} = \bs{M}^{-1} \odot (\bs{pp}^H)^*$ ($\odot$ represents the Hadamard product), $\bs{M}$ is an error covariance matrix (of size $N$) corresponding to a zero-mean Gaussian vector, $a$ represents channel propagation and backscattering effects, $\bs{c}$ represents the code elements, and $\bs{p}$ is the temporal steering vector. See \cite{Maio_code_design} for a detailed study of this application problem. From Theorem~\ref{theorem:perf_bound}, we know that if $Tr(\ol{\bs{R}}) \leq Tr(\bs{R})$ for this application, then the \emph{greedy} and the \emph{row-swap greedy} methods for this application are guaranteed to provide the performance of $\left( 1 - \frac{1}{e}\right)$ of that of the optimal.   
  
  In the case of a linear array of $N$ antennas, the problem of estimating the steering vector in adaptive beam-forming boils down to UQP as described in \cite{Mojtaba_UQP} \cite{Khab_AdapBeamForm}, where the objective function is $\bs{c}^H \bs{M}^{-1} \bs{c}$, where $\bs{M}$ is the sample covariance matrix (of size $N$), and $c$ represents the steering vector; see \cite{Khab_AdapBeamForm} for details on this application problem. Again, we can verify that if $Tr(\ol{\bs{R}}) \leq Tr(\bs{R})$, where $\bs{R} = \bs{M}^{-1}$, then the \emph{greedy} and the \emph{row-swap greedy} methods provide a performance guarantee of $\left( 1 - \frac{1}{e}\right)$, as the result in Theorem~\ref{theorem:perf_bound} holds true for this case as well.  
  
  \section{SIMULATION RESULTS}\label{sec:Simulation}
  We test the performance of the heuristic method $\mathcal{D}$ numerically for $N=20, 50, 100$. We generate 500 Hermitian and positive semi-definite matrices randomly for each $N$, 
  and for each matrix we evaluate $V_{\mathcal{D}}$ (value from the method $\mathcal{D}$) and the performance bound derived in Proposition~\ref{theorem:heur_perf}. To generate a random Hermitian and positive semi-definite matrix, we use the following algorithm: 1) first we generate a random Hermitian matrix $A$ using the function \emph{rherm}, which is available at \cite{rherm}; 2) second we replace the eigenvalues of $A$ with values randomly (uniform distribution) drawn from the interval $[0,1000]$. Figure~\ref{fig:Heur_D_perf} shows plots of $\frac{V_{\mathcal{D}}}{\lambda_N N}$ (normalized objective function value) for each $N$ along with the performance bounds for $\mc{D}$, which also shows $\frac{V_{\mathrm{rand}}}{\lambda_N N}$, where $V_{\mathrm{rand}}$ is the objective function value when the solution is picked randomly from $\Omega^N$. The numerical results clearly show that the method $\mathcal{D}$ outperforms (by a good margin) random selection, and more importantly the performance of $\mathcal{D}$ is close to the optimal strategy, which is evident from the simulation results, where the objective function value from $\mathcal{D}$ is at least $90\%$ (on average) of the upper bound on the optimal value for each $N$. The results show that the lower bound is much smaller than the value we obtain from the heuristic method for every sample. In our future study, we will tighten the performance bound for $\mathcal{D}$ as the results clearly show that there is room for improvement. 
  
  \begin{figure}[t]
  \includegraphics[width= \columnwidth, trim = 60 160 130 160,clip]{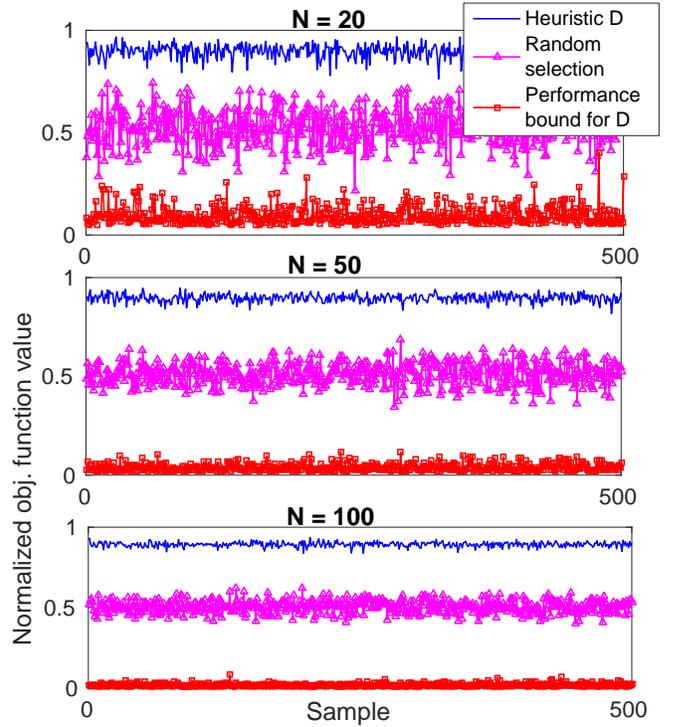}
  \caption{Performance of $\mathcal{D}$ for $N=20, 50, 100$}
  \label{fig:Heur_D_perf}
  \end{figure}    
  
  Figure~\ref{fig:Heur_G_perf} shows the normalized objective function value from the greedy method, for each $N$, along with the bound $(1-1/e)$, supporting the result from Theorem~\ref{theorem:perf_bound}.
  
  \begin{figure}[t]
  \includegraphics[width= \columnwidth, trim = 60 190 135 110,clip]{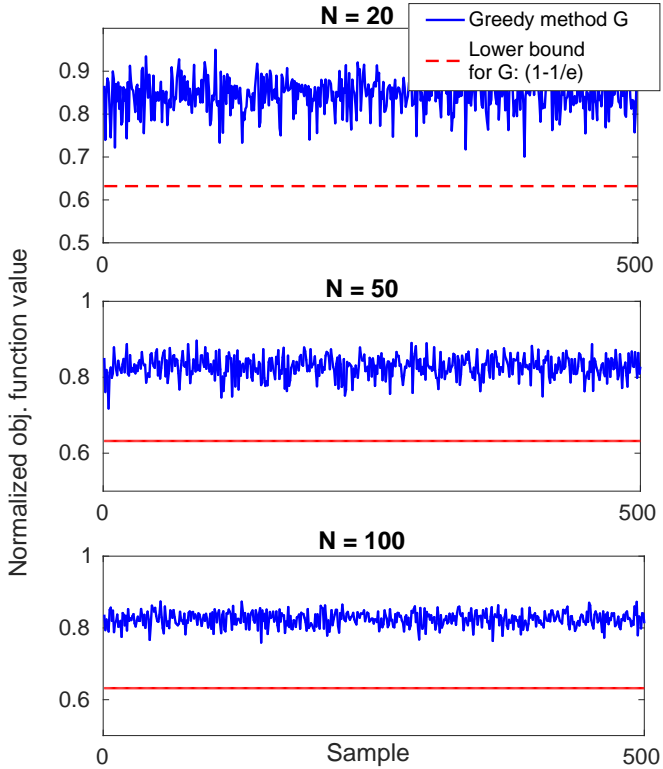}
  \caption{Performance of $\mathcal{G}$ for $N=20, 50, 100$}
  \label{fig:Heur_G_perf}
  \end{figure}
  
  We now present numerical results to show the performance of the \emph{row-swap greedy method} for $N=10,20,30$. We generate 100 Hermitian and positive semi-definite matrices. For each of these matrices, we solve the UQP via the \emph{row-swap greedy method} and also evaluate the performance bound $\left( 1 - \frac{1}{e}\right)$. Figure~\ref{fig:rswapG_perf} shows plots of the normalized objective function values from the \emph{row-swap greedy method} along with the performance bounds. It is evident from these plots that the above heuristic method performs much better than the lower bound suggests, and also suggests that this method performs close to optimal. 
  \begin{figure}[h]
  \includegraphics[width= \columnwidth, trim = 180 250 190 190,clip]{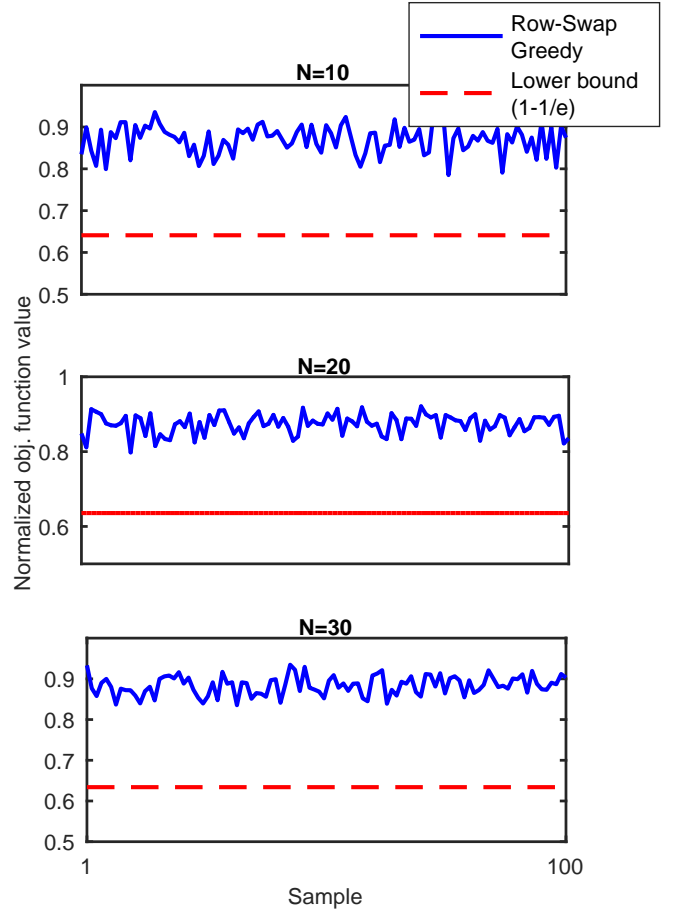}
  \caption{Performance of row-swap greedy method for $N=10,20,30$}
  \label{fig:rswapG_perf}
  \end{figure}
  
  We now compare the performance of the heuristic methods presented in this study against a standard benchmark method called \emph{semidefinite relaxation} (SDR). The following is a brief description of SDR, as described in \cite{Mojtaba_UQP} (repeated here for completeness). We know that $\bs{s}^H \bs{Rs} = \text{tr}(\bs{s}^H \bs{Rs}) = \text{tr}(\bs{Rs}\bs{s}^H)$. Thus, UQP can also be stated as follows:
  \begin{equation*}
  \begin{aligned}
  & \underset{\bs{S} \in \Omega^N}{\text{maximize}} 
  & & \text{tr}(\bs{RS}) \\
  & \text{subject to} 
  & & \bs{S} = \bs{s}\bs{s}^H, \, \bs{s}\in \Omega^N.
  \end{aligned}
  \end{equation*}
  The rank constraint $\bs{S} = \bs{s}\bs{s}^H$ is what makes the UQP hard to solve exactly. If this constraint is relaxed, then the resulting optimization problem is a semidefinite program, as shown below:
  \begin{equation*}
  \begin{aligned}
  & \underset{\bs{S} \in \Omega^N}{\text{maximize}}
  & & \text{tr}(\bs{RS}) \\
  & \text{subject to} 
  & & [\bs{S}]_{k,k} = 1, \, k=1,\ldots,N\\
  &&& \text{$\bs{S}$ is positive semidefinite.} 
  \end{aligned}
  \end{equation*}
  The above method is called \emph{semidefinite relaxation} (SDR). The semidefinite program shown above can be solved in polynomial time by any interior point method \cite{Boyd_CVXopt}; we use a solver called \emph{cvx} \cite{cvx} to solve this SDR. 
  
  The authors of \cite{Mojtaba_UQP} proposed a \emph{power method} to solve the UQP approximately, which is an iterative approach described as follows:
  \begin{equation*}
  \bs{s}^{t+1} = e^{j \mathrm{arg}(\bs{R}\bs{s}^{t})}, 
  \end{equation*}
  where $\bs{s}^0$ is initialized to a random solution in $\Omega^N$. The authors also proved that the objective function value is guaranteed to increase with $t$. 
  
  We now test the performance of our proposed heuristic methods - \emph{dominant eigenvector matching heuristic}, \emph{greedy strategy}, and \emph{row-swap greedy strategy} against existing methods such as the \emph{SDR} and the above-mentioned \emph{power method}. For this purpose, we generate 100 Hermitian and positive-semidefinite matrices. For each of these matrices, we solve the UQP approximately with the above-mentioned heuristic methods. Figure~\ref{fig:all_perf_plot_2} shows the cumulative distribution function of the objective values and the execution times of the heuristic methods for several values of $N$. It is evident from this figure that the proposed heuristic methods significantly outperform the standard benchmark method - SDR. Specifically, the \emph{row-swap greedy} and the \emph{dominant eigenvector-matching} methods deliver the best performance among the methods considered here. However, the \emph{row-swap greedy} method is the most expensive method (in terms of execution time) among the methods considered. Also, we can distinctly arrange a few methods in a sequence of increasing performance (statistical) as follows: \emph{SDR method}, \emph{greedy strategy}, \emph{dominant eigenvector matching heuristic} or \emph{row-swap greedy strategy}. This figure also shows the cumulative distribution functions of the execution times for each $N$, which suggests that all the heuristic methods considered here can be arranged in a sequence of increasing performance (decreasing execution time) as follows: \emph{row-swap greedy strategy}, \emph{SDR method}, \emph{greedy strategy}, \emph{power-method}, and \emph{dominant eigenvector matching heuristic}. 
  \begin{figure*}[t]
  \includegraphics[width= \textwidth, trim = 0 140 0 145,clip]{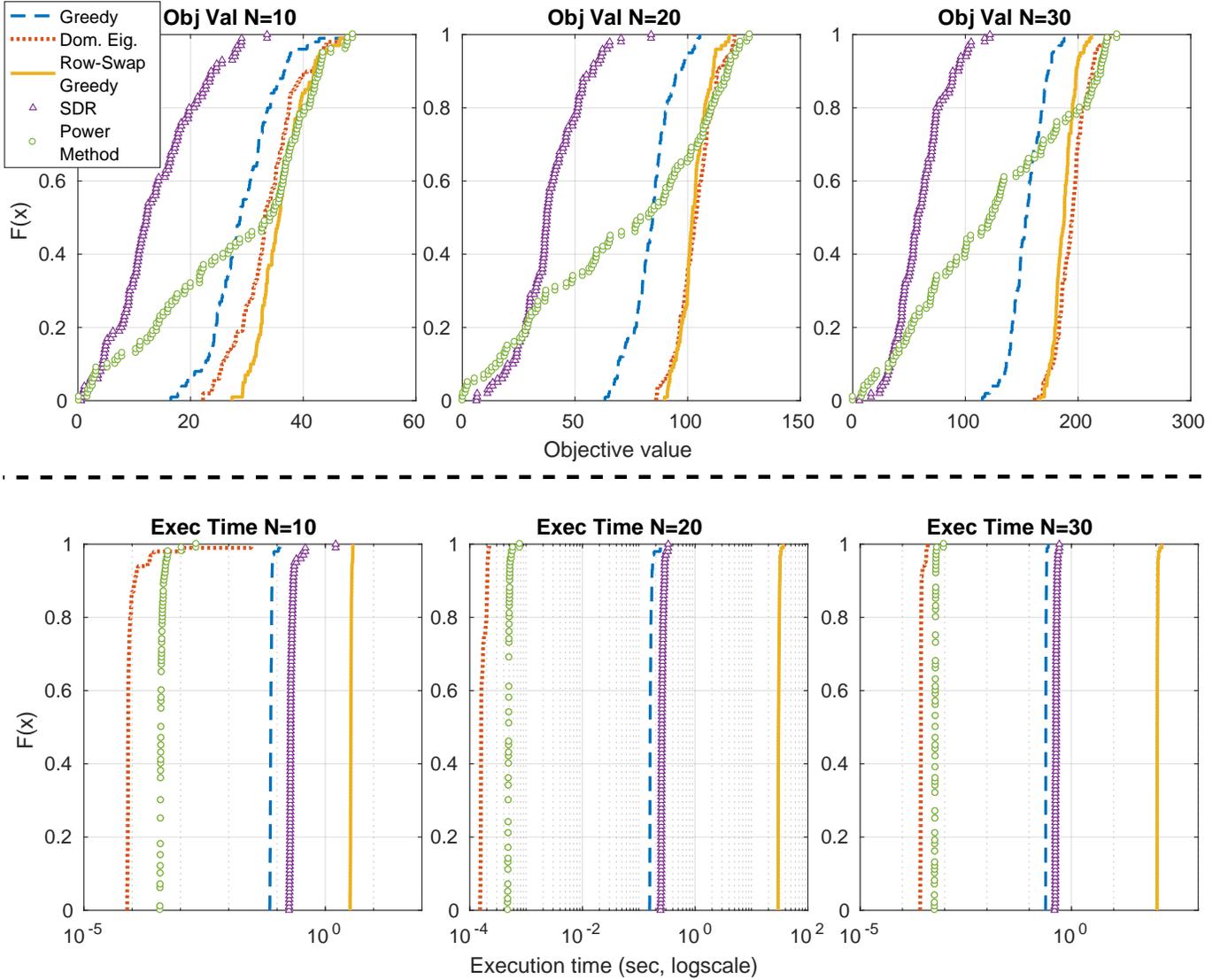}
  \caption{Performance of heuristic methods for $N=10,20,30$}
  \label{fig:all_perf_plot_2}
  \end{figure*}
  
%
  \section{CONCLUDING REMARKS}\label{sec:conclusion} 
  We presented three new heuristic methods to solve the UQP problem approximately with polynomial-time complexity with respect to the size of the problem. The first heuristic method
  was based on the idea of matching the unimodular sequence with the dominant eigenvector of the Hermitian matrix in the UQP formulation. We have provided a performance bound for this heuristic that depends on the eigenvalues of the Hermitian matrix in the UQP. The second heuristic method is a \textit{greedy} strategy. We showed that under loose conditions on the Hermitian matrix, the objective function would possess a property called string submodularity, which then allowed this \textit{greedy} method to provide a performance guarantee of $(1-1/e)$ (a consequence of string-submodularity). We presented a third heuristic method called \emph{row-swap greedy strategy}, which is guaranteed to perform at least as well as a regular \emph{greedy strategy}, but is computationally more intensive compared to the latter. Our numerical simulations demonstrated that each of the proposed heuristic methods outperforms a commonly used heuristic called \emph{semidefinite relaxation} (SDR).
  
\begin{appendix}
\section{Proof for Proposition~\ref{prop:2Ndom}}
\begin{proof}
  We know that $Tr(\ol{\bs{R}}) = \sum_k^N (4k-2) \delta_k$. If $\bs{R}$ is $2N$-dominant, we can verify that 
  \begin{equation}\label{eq:delta_sumbnd}
  \sum_{k=2}^N \delta_k \leq \frac{1}{4N} Tr(\bs{R}),
  \end{equation}
  Therefore, the following inequalities hold true 
  \begin{equation*}
  \begin{aligned}
  Tr(\ol{\bs{R}}) &= \sum_k^N (4k-2) \delta_k \\
  &\leq (4N-2) \sum_k^N \delta_k \\
  &\leq (4N-2) \left(\frac{1}{4N}\right) Tr(\bs{R})\\
  &\leq Tr(\bs{R}).
  \end{aligned}
  \end{equation*}
  For any given $2N$-dominant Hermitian matrix $\bs{R}$, if $\bs{o}$ is the optimal solution to the UQP, and $r_{ij}$ is the element of $\bs{R}$ at the $i$th row and $j$th column, we can verify the following:
  \begin{equation}\label{eq:optim_bnd}
  \begin{aligned}
  \bs{o}^H \bs{R} \bs{o} &\leq \sum_{i=1}^{N}\sum_{j=1}^N |r_{ij}| = \sum_{i=1}^{N} |r_{ii}| + \sum_i^N \sum_{j = 1, j \neq i}^N |r_{ij}|, \\
   & \leq  \sum_{i=1}^{N} |r_{ii}| + \frac{1}{2N} \sum_{i=1}^{N} |r_{ii}|  = \left(1 + \frac{1}{2N}\right) Tr(\bs{R}).
  \end{aligned}
  \end{equation}
  Also, for any $2N$-dominant Hermitian matrix $\bs{R}$, from Remark~\ref{remark:RbarExp}, (\ref{eq:delta_sumbnd}), and (\ref{eq:optim_bnd}) we can derive the following:  
  \begin{equation*}
  \begin{aligned}
  Tr(\bs{R}) - Tr(\ol{\bs{R}}) &= Tr(\bs{R}) - \sum_{k=2}^N (4k-2) \delta_k\\
  &\geq Tr(\bs{R}) - (4N-2)\sum_{k=2}^N \delta_k \\
  &\geq Tr(\bs{R}) - \left(\frac{4N-2}{4N}\right) Tr(\bs{R})\\
  &= \left(\frac{1}{2N}\right) Tr(\bs{R})\\
  &\geq \left(\frac{1}{2N+1}\right) \bs{o}^H \bs{R} \bs{o}.
  \end{aligned}
  \end{equation*}
  From (\ref{eq:greedy_opt_comp}) and the above result, we can obtain the following:
  \begin{equation*}
  \begin{aligned}
  \bs{g}^H \bs{R} \bs{g} &\geq \left(1 - \frac{1}{e}\right)\bs{o}^H \bs{R} \bs{o} + \frac{1}{e}\left(Tr(\bs{R}) -Tr(\ol{\bs{R}}) \right) \\
  &\geq \left(1 - \frac{1}{e}\right)\bs{o}^H \bs{R} \bs{o} + \frac{1}{e}\left(\frac{1}{2N+1}\right) \bs{o}^H \bs{R} \bs{o}\\
  &= \left( 1 - \frac{1}{e} + \frac{1}{e}\left(\frac{1}{2N+1}\right)\right) \bs{o}^H \bs{R} \bs{o}.
  \end{aligned}
  \end{equation*}
  \qed
  \end{proof}
  
\end{appendix}
  \bibliographystyle{IEEEtran}
  \bibliography{ref}
  
\end{document}